\documentclass[a4paper,11pt]{article}

\usepackage[top=3.0cm,bottom=3.0cm,left=2.5cm,right=2.5cm]{geometry}
\usepackage{tikz}
\usepackage{graphics,epsfig,psfrag}
\usepackage{graphicx}
\usepackage{amsfonts}
\usepackage{amssymb}
\usepackage{amsmath}
\usepackage{amsthm}
\usepackage[english]{babel}
\usepackage{ctex}
\usepackage{hyperref}
\usepackage{cases}
\usepackage{authblk}
\usepackage{mathtools}
\usepackage{pifont}

\newtheorem{lemma}{Lemma}[section]
\newtheorem{theorem}[lemma]{Theorem}
\newtheorem{corollary}[lemma]{Corollary}

\newtheorem{claim}[]{\noindent Claim}[section]


\begin{document}

\title{Characterization of $P_3$-connected graphs}

        \author[1]{Rong Chen\footnote{Email: rongchen@fzu.edu.cn.}}

\affil[1]{Center for Discrete Mathematics, Fuzhou University\\

Fuzhou, People's Republic of China}

	\date{}
	\maketitle
	\begin{abstract}
For any pair of edges $e,f$ of a graph $G$, we say that {\em $e,f$ are $P_3$-connected in $G$} if there exists a sequence of edges $e=e_0,e_1,\ldots, e_k=f$ such that $e_i$ and $e_{i+1}$ are two edges of an induced $3$-vertex path in $G$ for every $0\leq i\leq k-1$. If every pair of edges of $G$ are $P_3$-connected in $G$, then $G$ is {\em $P_3$-connected}. $P_3$-connectivity was first defined by Chudnovsky et al. in 2024 to prove that every connected graph not containing $P_5$ as an induced subgraph has cop number at most two. In this paper, we give a characterization of $P_3$-connected graphs and prove that a simple graph is $P_3$-connected if and only if it is connected and has no homogeneous set whose induced subgraph contains an edge.
	\end{abstract}
	
	
	\textbf{Keywords}: Homogeneous sets, $P_3$-connectivity


\section{\bf Introduction}
All graphs considered in this paper are finite and simple. Let $P_n$ be a path with exactly $n$ vertices. Let $X$ be a subset of a graph $G$. When $G[X]$ contains no edge, we say that $X$ is \emph{stable}. If $2\leq|X|<|V(G)|$ and for any vertex $y\in V(G)-X$, either $X\subseteq N(y)$ or $X\cap N(y)=\emptyset$, then  we say that $X$ is a {\em homogenous set} of $G$. For any pair of edges $e,f$ of $G$, we say that {\em $e,f$ are $P_3$-connected in $G$} if there exists a sequence of edges $e=e_0,e_1,\ldots, e_k=f$ such that $e_i$ and $e_{i+1}$ are two edges of an induced $P_3$ in $G$ for every $0\leq i\leq k-1$. If every pair of edges of $G$ are $P_3$-connected in $G$, then $G$ is {\em $P_3$-connected}. $P_3$-connectivity was first defined by Chudnovsky et al. in \cite{CNPT} to prove that every connected graph not containing $P_5$ as an induced subgraph has cop number at most two. In this paper, we give a characterization of $P_3$-connected graphs and prove the following result.


\begin{theorem} \label{main result}
A graph is $P_3$-connected if and only if it is connected and has no non-stable homogeneous set.
\end{theorem}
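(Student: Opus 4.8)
The plan is to prove the two implications separately. Throughout write $e\sim f$ to mean that the edges $e,f$ of $G$ are $P_3$-connected; then $\sim$ is an equivalence relation on $E(G)$ and ``$G$ is $P_3$-connected'' says that $\sim$ has a single class. Note that any two consecutive edges in a $P_3$-connection are the two edges of an induced $P_3$ and hence share a vertex.

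\emph{Forward implication.} I argue contrapositively: if $G$ is not connected, or has a non-stable homogeneous set, then it is not $P_3$-connected. If $G$ is disconnected (and has at least two edges, the small cases being clear) then either two edges lie in different components and are not $P_3$-connected, or all edges lie in one component $C\neq G$, and then $V(C)$ is a non-stable homogeneous set; so it suffices to treat a connected $G$ with a non-stable homogeneous set $X$. Let $N$ and $M$ be, respectively, the sets of vertices outside $X$ complete to and anticomplete to $X$, so $V(G)=X\cup N\cup M$. The key observation is that if $ab,bc$ are the two edges of an induced $P_3$ and $a,b\in X$, then $c\in X$ too: if $c\in N$ then $ac\in E(G)$, contradicting that the $P_3$ is induced, and if $c\in M$ then $bc\notin E(G)$. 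Hence, starting from an edge with both ends in $X$, every edge reachable under $\sim$ has both ends in $X$. Since $G$ is connected, $X\neq V(G)$, and there is no edge from $X$ to $M$, there is an edge $f$ from $X$ to $N$; then $f$ has an end outside $X$, so $f$ is not $P_3$-connected to any edge inside $X$, and $G$ is not $P_3$-connected.

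\emph{Backward implication.} Suppose $G$ is connected and has no non-stable homogeneous set, and, for contradiction, that $\sim$ has at least two classes. I first observe that for every $\sim$-class $\mathcal{E}$ the subgraph $(V(\mathcal{E}),\mathcal{E})$ of $G$ is connected: since consecutive edges in a $P_3$-connection share a vertex, a chain of $P_3$-connections inside $\mathcal{E}$ traces a walk in this subgraph, and every vertex of $V(\mathcal{E})$ lies on some edge of $\mathcal{E}$. Next I claim that $X:=V(\mathcal{E})$ satisfies the homogeneity condition: otherwise some $y\notin X$ has both a neighbour $a\in X$ and a non-neighbour $b\in X$, and a path of $\mathcal{E}$-edges from $a$ to $b$ contains an edge $cd\in\mathcal{E}$ with $c\in N(y)$, $d\notin N(y)$; but then $\{y,c,d\}$ is an induced $P_3$, so $yc\sim cd$, forcing $yc\in\mathcal{E}$ and $y\in V(\mathcal{E})=X$, a contradiction. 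Therefore, if some $\sim$-class $\mathcal{E}$ has $V(\mathcal{E})\neq V(G)$, then $V(\mathcal{E})$ is a non-stable homogeneous set, contrary to hypothesis; so from now on \emph{every} $\sim$-class spans $V(G)$.

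The remaining task — ruling out the possibility that $\sim$ has $m\ge 2$ classes $\mathcal{E}_1,\dots,\mathcal{E}_m$ each spanning $V(G)$ — is the step I expect to be the crux. In this situation, for every vertex $v$ the neighbourhood $N(v)$ is the disjoint union of the nonempty sets $C_i^{v}=\{w:vw\in\mathcal{E}_i\}$, and any two of them are completely joined in $G$, since a missing edge between $C_i^{v}$ and $C_j^{v}$ with $i\neq j$ would yield an induced $P_3$ at $v$ forcing $\mathcal{E}_i=\mathcal{E}_j$. So every $N(v)$ is a join and $\deg(v)\ge m\ge 2$. I would exploit this rigid local picture to exhibit a non-stable homogeneous set after all: for example, at a vertex $v$ of minimum degree, if some class-part $C_i^{v}$ consists of a single vertex $a$ one tries to show $N[a]=N[v]$, so that $\{a,v\}$ is a pair of true twins and a non-stable homogeneous set, and otherwise one looks for a small module contained in $N[v]$. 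Making this final step fully rigorous — equivalently, showing that ``all $\sim$-classes span $V(G)$'' is incompatible with the absence of non-stable homogeneous sets — is the part I anticipate requiring the most care, perhaps via a global comparison of the spanning connected subgraphs $(V(G),\mathcal{E}_i)$.
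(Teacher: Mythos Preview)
Your forward implication and the first half of your backward implication are essentially the paper's Lemmas~2.1 and~2.2: you correctly show that each $\sim$-class $\mathcal{E}$ induces a connected subgraph, and that if $V(\mathcal{E})\neq V(G)$ then $V(\mathcal{E})$ would be a connected (hence non-stable) homogeneous set. So far so good, and this matches the paper.

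The genuine gap is exactly where you flag it: you have no argument for the ``crux'', and the idea you sketch does not work. If $C_i^{v}=\{a\}$ at a minimum-degree vertex $v$, you do get $N[v]\subseteq N[a]$ (since $a$ is complete to every other $C_j^{v}$), but minimality of $\deg(v)$ only gives $|N[a]|\ge|N[v]|$, not the reverse inclusion; a neighbour $w$ of $a$ outside $N[v]$ merely forces $aw\in\mathcal{E}_i$ via the induced path $v\text{--}a\text{--}w$, which is no contradiction. There is also no reason any $C_i^{v}$ should be a singleton. So the twin/true-twin heuristic does not close the proof.

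What the paper actually does for this step is more structured. Fix a vertex $x$ with $Y:=V(G)\setminus N[x]\neq\emptyset$, and refine each class-part $X_i:=C_i^{x}$ into its \emph{anti-components} $X_{i1},\dots,X_{im_i}$ (so all $X_{is}$ are pairwise complete). Using induced $P_3$'s through $Y$ one shows $[Y,X_i]\subseteq\mathcal{E}_i$ and $[X_{ij},X_{ik}]\subseteq\mathcal{E}_i$. The key dichotomy is that for $i\neq j$, either $[X_{is},X_{jt}]\subseteq\mathcal{E}_j$ with $X_{jt}$ complete to $Y_{is}:=N(X_{is})\cap Y$, or the symmetric statement with $i,j$ swapped; otherwise $X_{is}\cup X_{jt}$ would be a connected homogeneous set. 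Encoding this dichotomy as a tournament on the blocks $X_{is}$ and checking that a directed cycle would again produce a connected homogeneous set, one obtains a sink $X_{is}$: then every edge from $X_{is}$ to $V(G)\setminus X_{is}$ lies in $\mathcal{E}_i$, so no $\mathcal{E}_j$ with $j\neq i$ can be spanning, the desired contradiction. This anti-component/tournament argument is the missing idea in your proposal.
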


By Theorem \ref{main result} and the definition of homogeneous sets, we have

\begin{corollary} 
Each connected triangle-free graph is $P_3$-connected.
\end{corollary}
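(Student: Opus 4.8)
The plan is to derive the corollary directly from Theorem~\ref{main result}. Since that theorem already characterizes $P_3$-connected graphs as exactly the connected graphs having no non-stable homogeneous set, and the hypothesis of the corollary supplies connectivity for free, the whole task reduces to a single implication: showing that a triangle-free graph can never contain a non-stable homogeneous set. I would therefore open the argument by remarking that, by Theorem~\ref{main result}, it suffices to verify this one fact about triangle-free graphs.

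For the reduction itself I would argue by contradiction. Suppose $G$ is connected and triangle-free but nevertheless possesses a non-stable homogeneous set $X$. By the definition of \emph{non-stable}, $G[X]$ contains an edge, say $uv$ with $u,v\in X$. By the definition of \emph{homogeneous set} we also have $|X|<|V(G)|$, so $V(G)-X$ is nonempty, while $|X|\geq 2$ guarantees $X$ itself is nonempty.

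The key step is to produce a vertex outside $X$ that sees all of $X$. Because $G$ is connected and $X$ is a proper nonempty subset of $V(G)$, some edge must cross the cut between $X$ and $V(G)-X$; hence there is a vertex $y\in V(G)-X$ with $N(y)\cap X\neq\emptyset$. The homogeneous-set property then forces the dichotomy in the favorable direction: since $X\cap N(y)\neq\emptyset$, the only surviving alternative is $X\subseteq N(y)$. In particular $y$ is adjacent to both endpoints $u$ and $v$ of our chosen edge.

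Finishing is immediate: the vertices $u$, $v$, $y$ are pairwise adjacent, so $\{u,v,y\}$ induces a triangle in $G$, contradicting triangle-freeness. This contradiction shows that no non-stable homogeneous set can exist in a connected triangle-free graph, and combined with connectivity and Theorem~\ref{main result} it yields that $G$ is $P_3$-connected. I expect no genuine obstacle here, as the corollary is essentially a two-line deduction from the main theorem; the only point requiring care is the correct invocation of the homogeneous-set dichotomy (ruling out $X\cap N(y)=\emptyset$ so as to obtain $X\subseteq N(y)$), which is precisely the place where the connectivity hypothesis is used.
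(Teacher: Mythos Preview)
Your proposal is correct and matches the paper's approach: the paper simply states that the corollary follows from Theorem~\ref{main result} and the definition of homogeneous sets, which is exactly the deduction you spell out in detail. The only additional content in your write-up is the explicit use of connectivity to produce a vertex $y\in V(G)-X$ with a neighbour in $X$, which is the natural way to make the paper's one-line justification rigorous.
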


\section{Proof of Theorem \ref{main result}} 

For a graph $G=(V,E)$, if $X\subseteq V$, then we use $G[X]$ and $G\setminus X$ to denote the subgraphs of $G$ induced by $X$ and $V\setminus X$, respectively. When $X= \{ x\}$, we write $G\setminus x$ instead of $G\setminus\{x \}$.
Let $N(X)$ be the set of vertices in $V(G)-X$ that have a neighbour in $X$. Set $N[X]:=N(X)\cup X$.
We say that $X$ is {\em connected} when $G[X]$ is connected. If $X$ is connected and $X=V(G)$, we say that $X$ is {\em spanning}. 
For $A,B\subseteq V$, we write $[A,B]$ to denote the subgraph of $G$ induced by the set of edges that have one end in $A$ and another end in $B$. Note that a vertex in $A$ has no neighbour in $B$ is not a vertex in the graph $[A,B]$. We say that $A$ is {\em complete} to $B$ if every vertex in $A$ is adjacent to every vertex in $B$, and that $A$ is {\em anti-complete} to $B$ if $E([A,B])=\emptyset$. If $2\leq|A|<|V|$ and for any vertex $x\in V-A$, the vertex $x$ is either complete to $A$ or anti-complete to $A$, then we say that $A$ is a {\em homogeneous set} of $G$. Evidently, if $G$ has no non-stable homogeneous set, then it has no connected homogeneous set. This fact will be frequently used in the proof of Theorem \ref{main result}.

For any pair of edges $e,f$ of $G$, we say that $e$ is {\em $P_3$-connected to $f$} or {\em $e,f$ are $P_3$-connected in $G$} if there exists a sequence of edges $e=e_0,e_1,\ldots, e_k=f$ such that $e_i$ and $e_{i+1}$ are two edges of an induced $P_3$ in $G$ for every $0\leq i\leq k-1$. For any subgraph $H$ of $G$, if every pair of edges $e,f$ of $H$ are $P_3$-connected in $G$, we say that $H$ is {\em $P_3$-connected in $G$}. When $G$ is $P_3$-connected in $G$, we say that $G$ is {\em $P_3$-connected.} Note that, when $H$ is $P_3$-connected in $G$, the graph $H$ maybe not connected by definition. However, we have

\begin{lemma} \label{P3-connected}
If a graph $G$ is $P_3$-connected, then $G$ is connected and has no non-stable homogeneous set.
\end{lemma}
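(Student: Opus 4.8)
The plan is to prove the contrapositive in two parts: if $G$ is disconnected or has a non-stable homogeneous set, then $G$ is not $P_3$-connected. Both parts work by exhibiting an edge (or pair of edges) that cannot be $P_3$-connected across a certain "barrier." The key structural observation I would isolate first is this: if $e=xy$ and $f=x'y'$ are two edges of an induced $P_3$, say on vertices $a$--$b$--$c$ with $a\not\sim c$, then $\{x,y\}\cup\{x',y'\}=\{a,b,c\}$, so $e$ and $f$ share the vertex $b$ and their other endpoints $a,c$ are non-adjacent. In particular, consecutive edges in any $P_3$-connecting sequence always share a vertex; hence all edges in such a sequence lie in a single connected component of $G$. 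This immediately handles the disconnected case: if $G$ has at least two components and at least one edge $e$, pick $e$ and any edge $f$ in a different component (or note that a component with no edge makes the claim about "every pair of edges" vacuous only if $G$ has $\le 1$ component containing edges — so we may assume two edges in distinct components), and they are not $P_3$-connected.

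For the homogeneous-set part, let $A$ be a non-stable homogeneous set of $G$, so $|A|\geq 2$, $A\neq V(G)$, every vertex outside $A$ is complete or anti-complete to $A$, and $G[A]$ contains an edge $e_0=uv$ with $u,v\in A$. The plan is to show $e_0$ is $P_3$-connected only to other edges inside $G[A]$, so that any edge with at most one endpoint in $A$ (which exists since $A\neq V(G)$ and $G$ is assumed to have such an edge — or, if not, the statement is again degenerate) is unreachable from $e_0$. I would prove by induction on the length of a $P_3$-connecting sequence that every edge in it has both endpoints in $A$. The inductive step is the crux: suppose $e_i=xy$ has $x,y\in A$ and $e_{i+1}$ shares a vertex with $e_i$, with $\{e_i\text{-endpoints}\}\cup\{e_{i+1}\text{-endpoints}\}$ forming an induced $P_3$, say $p$--$q$--$r$ with $p\not\sim r$. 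Since $e_i\subseteq A$, the edge $e_i$ is one of $pq,qr$; in either case two of $p,q,r$ lie in $A$. I must show the third vertex, call it $w$, also lies in $A$. If $w\notin A$, then $w$ is complete or anti-complete to $A$; but $w$ is adjacent to one of the two $A$-vertices among $\{p,q,r\}$ and non-adjacent to the other (since one pair among the three is the non-edge $pr$ and $w$ is an endpoint of the $P_3$ or its center). A short case check on which of $p,q,r$ equals $w$ shows $w$ has both a neighbour and a non-neighbour in $\{p,q,r\}\cap A\subseteq A$, contradicting that $w$ is complete or anti-complete to $A$. Hence $w\in A$ and $e_{i+1}\subseteq A$, completing the induction.

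I expect the main obstacle to be the careful bookkeeping in this case analysis — keeping straight, for each of the possible positions of the new vertex $w$ in the induced $P_3$ and for each choice of which edge of the $P_3$ is $e_i$, exactly which vertices are forced into $A$ and which adjacency with $w$ is forced — and handling the genuinely degenerate boundary cases (a graph with no edges, or with all its edges concentrated so that "every pair of edges is $P_3$-connected" holds vacuously). I would dispatch those degenerate cases at the very start by noting that if $G$ has at most one edge the lemma's hypothesis cannot hold in a way that produces a counterexample, and otherwise there are at least two edges and the component/barrier arguments above apply. Once the $P_3$-sharing observation and the "$w\in A$" step are in hand, both directions of the contrapositive follow immediately.
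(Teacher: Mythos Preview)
Your proposal is correct and follows essentially the same approach as the paper's proof: both argue the contrapositive, first noting that edges in different components cannot be $P_3$-connected, and then showing (you by an explicit induction on the sequence length, the paper in a single sentence) that any induced $P_3$ containing an edge of $G[X]$ must have its third vertex inside $X$, so edges of $G[X]$ cannot reach edges with an endpoint outside $X$. Your write-up is more detailed and more attentive to degenerate boundary cases, but the core argument is identical.
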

\begin{proof}
Since two edges in different components of $G$ can not be $P_3$-connected in $G$ by the definition of $P_3$-connectivity, $G$ is connected. Assume to the contrary that $X$ is a non-stable homogeneous set of $G$. Let $e$ be  any edge of $G[X]$. Since $X$ is non-stable, such $e$ exists. Since no induced $P_3$-path $P$ contains $e$ such that $V(P)-V(\{e\})\notin X$, no edge in $G[X]$ can be $P_3$-connected to an edge with at least one end in $V(G)-X$, so $G$ is not $P_3$-connected, a contradiction. Hence, $G$ has no homogeneous non-stable set.
\end{proof}

Next, we will prove that if a graph is connected and has no non-stable homogeneous set, then it is $P_3$-connected.

When $e$ is $P_3$-connected to $f$ in $G$, and $f$ is $P_3$-connected to $g$ in $G$, it is obvious that $e$ is $P_3$-connected to $g$ in $G$. Hence, there is a partition $(E_1,E_2,\ldots, E_m)$ of $E(G)$ such that all $G|E_i$ are edge-maximal $P_3$-connected subgraphs in $G$. Evidently, the graph $G$ is $P_3$-connected if and only if $m=1$. By the definition of $P_3$-connectivity, each edge-maximal $P_3$-connected subgraph of $G$ is connected.

\begin{lemma} \label{spanning}
If a connected graph $G$ does not contain a connected homogeneous set, then each edge-maximal $P_3$-connected subgraph of $G$ is spanning.
\end{lemma}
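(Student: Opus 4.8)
The plan is to argue by contradiction, exploiting the transitivity of $P_3$-connectivity recorded just before the lemma. Let $H=G|E_i$ be an edge-maximal $P_3$-connected subgraph of $G$, and put $S:=V(H)$. Since $H$ contains an edge we have $|S|\geq 2$, and by the remark in the excerpt $H$ is connected. Suppose for contradiction that $H$ is not spanning, so that $2\leq|S|<|V(G)|$.

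First I would rule out $S$ being a homogeneous set of $G$: as $G[S]\supseteq H$ is connected, such an $S$ would be a connected homogeneous set, contrary to the hypothesis. Hence there is a vertex $x\in V(G)-S$ that is neither complete nor anti-complete to $S$; fix $a\in S\cap N(x)$ and $b\in S-N(x)$. Using that $H$ is connected, choose a path $a=w_0,w_1,\ldots,w_t=b$ with $w_jw_{j+1}\in E(H)$ for all $j$, and let $j$ be the largest index with $xw_j\in E(G)$; since $xw_0\in E(G)$ and $xw_t\notin E(G)$, we have $0\leq j<t$. Writing $u:=w_j$ and $v:=w_{j+1}$, the three vertices $x,u,v$ are pairwise distinct (as $x\notin S\ni u,v$ and $u\neq v$), and $G[\{x,u,v\}]$ is an induced $P_3$ with centre $u$: its edges are $xu$ and $uv$, and $xv\notin E(G)$ by maximality of $j$. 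In particular $xu$ and $uv$ are the two edges of an induced $P_3$ in $G$, so $xu$ is $P_3$-connected to $uv$, and $uv\in E(H)$.

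Finally I would close the argument: every edge of $H$ is $P_3$-connected to $uv\in E(H)$, and $uv$ is $P_3$-connected to $xu$, so by transitivity the subgraph $G|(E(H)\cup\{xu\})$ is $P_3$-connected in $G$; since $x\notin V(H)$, the edge $xu$ does not lie in $E(H)$, so this subgraph has strictly more edges than $H$, contradicting the edge-maximality of $H$. Hence $S=V(G)$, i.e.\ $H$ is spanning. The only place requiring a little care is verifying the distinctness and the incidence/non-incidence pattern in the induced $P_3$ on $\{x,u,v\}$; everything else is routine bookkeeping with the already-noted transitivity of $P_3$-connectivity.
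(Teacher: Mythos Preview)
Your proof is correct and follows essentially the same route as the paper's: both argue by contradiction, use the connectedness of the edge-maximal $P_3$-connected subgraph $H$ to locate an induced $P_3$ with one edge in $E(H)$ and one edge leaving $V(H)$, and derive the contradiction either with edge-maximality or with the absence of connected homogeneous sets. The only difference is presentational: the paper phrases it as ``every vertex of $N(H)$ must be complete to $V(H)$, hence $V(H)$ is a connected homogeneous set'', whereas you take the contrapositive order (since $V(H)$ is not homogeneous, some outside vertex $x$ is neither complete nor anti-complete) and then walk along an $H$-path explicitly to find the induced $P_3$.
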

\begin{proof}
Assume not. Let $H$ be a edge-maximal $P_3$-connected subgraph of $G$ that is not spanning. Then $|V(H)|\geq2$ as $H$ contains at least one edge. Since $H$ is connected, we have $V(H)\subsetneqq V(G)$. For any vertex $u\in N(H)$, if $u$ is not complete to $H$, then $G$ contains an induced $u$-$v$-$w$ path with $vw\in E(H)$ as $H$ is connected, so $G|(E(H)\cup\{uv\})$ is $P_3$-connected in $G$, which contradicts to the fact that $H$ is edge-maximal. Hence, $N(H)$ is complete to $H$ by the arbitrary choice of $u$. Then $V(H)$ is a connected homogeneous set as $|V(H)|\geq2$, a contradiction.
\end{proof}

We say that a graph $G$ is {\em anti-connected} or an {\em anti-path} if the complement of $G$ is connected or an path. Similarly, we say a subset $X$ of $V(G)$ is an {\em anti-component} of $G$ if $X$ is the vertex set of a component of the complement of $G$.

\begin{lemma}\label{complete adjacent}
Let $X,Y$ be disjoint vertex subsets of a graph $G$. If $X$ is complete to $Y$ and $G[X], G[Y]$ are anti-connected, then $[X,Y]$ is $P_3$-connected in $G$.
\end{lemma}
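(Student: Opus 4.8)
The plan is to first record the basic move: if $y\in Y$ and $x,x'\in X$ are non-adjacent in $G$, then $x$-$y$-$x'$ is an induced $P_3$ (the two edges $xy,x'y$ exist since $X$ is complete to $Y$, and $xx'\notin E(G)$ by hypothesis), so $xy$ and $x'y$ are $P_3$-connected in $G$ via a sequence of length one. Symmetrically, if $x\in X$ and $y,y'\in Y$ are non-adjacent in $G$, then $y$-$x$-$y'$ is an induced $P_3$, so $xy$ and $xy'$ are $P_3$-connected in $G$.

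Next I would exploit anti-connectedness. Since $G[X]$ is anti-connected, the complement of $G[X]$ is connected, so any two vertices of $X$ are joined by a path in that complement; applying the basic move along the consecutive non-adjacent pairs of such a path (with $y$ held fixed) and using transitivity of $P_3$-connectedness shows that, for each fixed $y\in Y$, all edges in $\{xy : x\in X\}$ are pairwise $P_3$-connected in $G$. In the same way, using that $G[Y]$ is anti-connected, for each fixed $x\in X$ all edges in $\{xy : y\in Y\}$ are pairwise $P_3$-connected in $G$. (If $|X|=1$ or $|Y|=1$ these statements are trivial, and if $[X,Y]$ is a single edge the conclusion holds vacuously.)

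Finally I would combine the two families. Given arbitrary edges $xy$ and $x'y'$ of $[X,Y]$ with $x,x'\in X$ and $y,y'\in Y$, the first family (with $x$ fixed) gives that $xy$ is $P_3$-connected to $xy'$, and the second family (with $y'$ fixed) gives that $xy'$ is $P_3$-connected to $x'y'$; transitivity then yields that $xy$ and $x'y'$ are $P_3$-connected in $G$. Since every edge of $[X,Y]$ has one end in $X$ and one end in $Y$, this proves $[X,Y]$ is $P_3$-connected in $G$.

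\textbf{Main obstacle.} There is no serious obstacle here; the only points requiring care are checking that each three-vertex path produced really is \emph{induced} in $G$ (which is immediate from "$X$ complete to $Y$" plus the chosen non-adjacency), and handling the degenerate cases $|X|=1$, $|Y|=1$, or $|E([X,Y])|\le 1$ separately, where anti-connectedness is vacuous.
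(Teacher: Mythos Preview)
Your proposal is correct and follows essentially the same approach as the paper: fix $y\in Y$ and walk along an anti-path in $G[X]$ to show all edges of $[X,y]$ are $P_3$-connected, do the symmetric argument for each $[x,Y]$, and then combine by transitivity. The paper's write-up is terser (it omits the explicit treatment of degenerate cases and the final $xy\to xy'\to x'y'$ chaining), but the underlying argument is identical.
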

\begin{proof}
Let $y\in Y$ and $u,v$ be neighbours of $y$ in $X$. Since $X$ is anti-connected, there is an anti-path $u=x_0$-$x_1$-$\cdots$-$x_m=v$ in $G[X]$. Since $x_ix_{i+1}\notin E(G)$ and $y$ is complete to $X$, $x_i$-$y$-$x_{i+1}$ is an induced $P_3$ for all $0\leq i\leq m-1$, so $yu$ is $P_3$-connected to $yv$ in $G$. Hence,  by the arbitrary choice of $u,v$, the subgraph $[X,y]$ is $P_3$-connected in $G$ for each $y\in Y$. By symmetry, each $[x,Y]$ is also $P_3$-connected in $G$ for eah $x\in X$. Hence, $[X,Y]$ is $P_3$-connected in $G$ as $X$ is complete to $Y$.
\end{proof}

\begin{lemma} \label{main lem}
If a connected graph $G$ has no non-stable homogeneous set, then $G$ is $P_3$-connected.
\end{lemma}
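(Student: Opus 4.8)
Suppose the statement fails, and among all connected graphs with no non-stable homogeneous set that are not $P_3$-connected let $G$ have $|V(G)|$ minimum. Under the hypothesis every non-trivial homogeneous set of $G$ is stable, so $G$ has no connected homogeneous set, and hence by Lemma~\ref{spanning} every edge-maximal $P_3$-connected subgraph $G|E_i$ is spanning. Since $G$ is not $P_3$-connected there are $m\ge 2$ of these, so every vertex of $G$ is incident with an edge of each class $E_i$. As two edges $vx,vy$ at a common vertex with $x\not\sim y$ are the edges of an induced $P_3$ and so lie in one class, all edges at a vertex $v$ going into a single anti-component of $G[N(v)]$ lie in one class; since $v$ meets at least two classes, $G[N(v)]$ has at least two anti-components, for every $v$. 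One local remark will be used below: if $r\notin N[v]$ has a neighbour $a$ in an anti-component $A$ of $G[N(v)]$, then $r$-$a$-$v$ is an induced $P_3$, so $ra$ lies in the class of the edges from $v$ into $A$.

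The first case to handle is that $G$ has a non-trivial homogeneous set $X$; by hypothesis $G[X]$ is edgeless, while $X$ is complete to $N(X)$ and anti-complete to $V(G)\setminus N[X]$. Contract $X$ to one vertex $x^{\ast}$, adjacent to exactly $N(X)$, obtaining a connected graph $G'$ with $|V(G')|<|V(G)|$ whose homogeneous sets correspond to those of $G$ with the same stability, so $G'$ has no non-stable homogeneous set; by minimality $G'$ is $P_3$-connected. To lift this, note every edge of $G$ either lies inside $V(G)\setminus X$, where it is an edge of $G'$, or joins $X$ to some $u\in N(X)$, where it projects to the edge $x^{\ast}u$ of $G'$; all edges $xu$ with $x\in X$ are mutually $P_3$-connected in $G$ via the induced paths $x$-$u$-$x'$ (legitimate since $X$ is stable); and every induced $P_3$ of $G'$ lifts to an induced $P_3$ of $G$ once $x^{\ast}$, if present, is replaced by a suitable vertex of $X$ (a short check on the three possible positions of $x^{\ast}$, using that $X$ is complete to $N(X)$ and anti-complete to $V(G)\setminus N[X]$). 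Chaining these facts along a sequence witnessing $P_3$-connectivity of $G'$ shows $G$ is $P_3$-connected, a contradiction. In particular this covers the case that $G$ is a join, since then a side of the join is a homogeneous set, unless $G=K_2$.

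Hence $G$ has no non-trivial homogeneous set at all; in particular $G$ has no universal vertex, so $V(G)\setminus N[v]\ne\emptyset$ for every $v$. The remaining task — which I expect to be the main obstacle — is to derive a contradiction from this. Fix a vertex $v$ and an anti-component $A$ of $G[N(v)]$ and put $B=N(v)\setminus A\ne\emptyset$; then inside $N[v]$ the only vertices outside $\{v\}\cup A$ are those of $B$, all complete to $\{v\}\cup A$, so $\{v\}\cup A$ is a homogeneous set of $G$ — and, since it contains the edge $va$, a non-stable one — unless some vertex $r\in V(G)\setminus N[v]$ has a neighbour $a\in A$. As $G$ has no such homogeneous set, such a vertex $r$ always exists. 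I would then choose $(v,A)$ extremally (for instance with $|A|$, or with $|N[v]\setminus A|$, minimum) and apply the local remark repeatedly, analysing how such an $r$ lies relative to the anti-components of $N(r)$, of $N(a)$, and of the remaining anti-components of $N(v)$ — here Lemma~\ref{complete adjacent}, applied to pairs of anti-components carrying different classes, should help — so as to force the set of these vertices $r$, together with $\{v\}\cup A$, into a genuine homogeneous set, contradicting the fact that $G$ has none. Making this propagation terminate, rather than swallow all of $V(G)$, is the technical crux.

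The base of the induction is immediate: a connected graph on at most three vertices with no non-stable homogeneous set is $K_1$, $K_2$, or $P_3$ ($K_3$ has a non-stable homogeneous set, namely any two of its vertices), and each of these is $P_3$-connected.
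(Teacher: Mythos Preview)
Your reduction in Case~1 (contracting a stable homogeneous set $X$ to a single vertex and lifting $P_3$-connectivity back) is sound, and the verification that $G'$ inherits ``no non-stable homogeneous set'' is correct. But this case is not where the content lies; the paper does not use this reduction at all and works directly with a general $G$.

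The genuine gap is Case~2. You explicitly say ``I would then choose $(v,A)$ extremally \ldots'' and ``Making this propagation terminate \ldots\ is the technical crux'', which is an outline of intent, not a proof. Nothing in what you wrote forces the propagation to stop short of $V(G)$, and no concrete extremal choice is shown to work. This is exactly the step the paper carries out, and its mechanism is rather different from what you sketch. Fix a non-universal vertex $x$, let $X_i=\{u\in N(x):xu\in E_i\}$, and split each $X_i$ into its anti-components $X_{i1},\ldots,X_{im_i}$. For $i\ne j$ one shows (via Lemma~\ref{complete adjacent} and the induced paths through $Y=V(G)\setminus N[x]$) that $[X_{is},X_{jt}]$ lies entirely in \emph{one} of $E_i,E_j$; this defines a tournament $D$ on the set of all $X_{is}$. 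A directed cycle in $D$ would force the $Y$-neighbourhoods $Y_{is}$ along the cycle to coincide and the union of those $X_{is}$ to be complete to that common $Y_{is}$, producing a connected homogeneous set; hence $D$ is acyclic. A sink $X_{is}$ then has $[X_{is},V(G)\setminus X_{is}]\subseteq E_i$, so no vertex of $X_{is}$ is incident with any $E_j$ for $j\ne i$, contradicting that $G|E_j$ is spanning. This acyclic-tournament/sink idea is the missing ingredient; your ``propagation'' heuristic does not visibly lead to it.
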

\begin{proof}
Assume not. Then there is a partition $(E_1,E_2,\ldots, E_m)$ of $E(G)$ with $m\geq2$ such that all $G|E_i$ are edge-maximal $P_3$-connected subgraphs in $G$. So $|V(G)|\geq3$ as $G$ is simple.
\begin{claim}\label{Ei}
For each $1\leq i\leq m$, the subgraph $G|E_i$ is a spanning subgraph of $G$.
\end{claim}
\begin{proof}
Since the subgraph induced by each connected homogeneous sets contain at least one edge  and $G$ has no non-stable homogeneous set, $G$ has no connected homogeneous set. Hence, Claim \ref{Ei} follows immediately from Lemma \ref{spanning}.
\end{proof}

Since $G$ has no non-stable homogeneous set and $|V(G)|\geq3$, the graph $G$ is not a clique.
Arbitrary choose a vertex $x\in V(G)$ with $V(G)-N[x]\neq\emptyset$. Set
\[Y:=V(G)-N[x],\ \ X_i:=\{u\in N(x):\ xu\in E_i\},\] for any integer $1\leq i\leq m$.
By the definition of $P_3$-connectivity, since $G$ is simple, $(X_1,X_2,\ldots, X_m)$ is a partition of $N(x)$, and $X_i$ is complete to $X_j$ for all $1\leq i<j\leq m$. Let $(X_{i1},X_{i2},\ldots, X_{im_i})$ be the partition of $X_i$ such that all $X_{ij}$ are anti-components of $G[X_i]$. That is, $X_{ij}$ is complete to $X_{ik}$ for all $1\leq j<k\leq m_i$. Hence,
\begin{claim}\label{complete}
For any integers $1\leq i\leq j\leq m$, $1\leq s\leq m_i$ and $1\leq t\leq m_j$, the set $X_{is}$ is complete to $X_{jt}$.
\end{claim}

\begin{claim}\label{color}
For any integer $1\leq i\leq m$, we have $[Y,X_i]\subseteq E_i$.
\end{claim}
\begin{proof}
For any edge $yx'\in E([Y,X_i])$ with $y\in Y$ and $x'\in X_i$, since $y$-$x'$-$x$ is an induced 3-vertex path and $xx'\in E_i$, we have $yx'\in E_i$, so the claim holds by the arbitrary choice of $yx'$.
\end{proof}

For any integers $1\leq i\leq m$ and $1\leq j\leq m_i$, set \[Y_{ij}:=\{y\in Y:\ y\ \text{has a neighbour in}\ X_{ij}\}.\] Then we have
\begin{claim}\label{Yij}
For any integers $1\leq i\leq m$ and $1\leq j\leq m_i$, the set $X_{ij}$ is anti-complete to $Y-Y_{ij}$.
\end{claim}
\begin{claim}\label{color+}
For any integers $1\leq i\leq m$ and $1\leq j<k\leq m_i$, we have $[X_{ij},X_{ik}]\subseteq E_i$.
\end{claim}
\begin{proof}
By Lemma \ref{complete adjacent} and Claim \ref{complete}, to prove the claim it suffices to show that some edge in $[X_{ij},X_{ik}]$ is in $E_i$. Assume that some $u\in X_{ik}$ has a non-neighbour $y$ in $Y_{ij}$. By the definition of $Y_{ij}$, there is a vertex $v\in N(y)\cap X_{ij}$. Then $yv\in E_i$ by Claim \ref{color}. Moreover, since $y$-$v$-$u$ is an induced path by Claim \ref{complete}, $uv\in E_i$. 
So we may assume that $X_{ik}$ is complete to $Y_{ij}$, implying $Y_{ij}\subseteq Y_{ik}$. By symmetry, $Y_{ik}\subseteq Y_{ij}$ and $X_{ij}$ is complete to $Y_{ik}$. Hence, $Y_{ij}=Y_{ik}$, and $X_{ij}\cup X_{ik}$ is complete to $Y_{ij}$. Since $X_{ij}\cup X_{ik}$ is anti-complete to $Y-Y_{ij}$ by Claim \ref{Yij}, the set $X_{ij}\cup X_{ik}$ is a homogeneous set of $G$ that is connected by Claim \ref{complete}, which is a contradiction to the fact that $G$ has no non-stable homogeneous set.
\end{proof}

\begin{claim}\label{color+1}
For any integer $1\leq i< j\leq m$, $1\leq s\leq m_i$ and $1\leq t\leq m_j$, exactly one of the following holds.
\begin{itemize}
\item[(1)] $[X_{is},X_{jt}]\subseteq E_j$, and $X_{jt}$ is complete to $Y_{is}$, implying $Y_{is}\subseteq Y_{jt}$.
\item[(2)] $[X_{is},X_{jt}]\subseteq E_i$, and $X_{is}$ is complete to $Y_{jt}$, implying $Y_{jt}\subseteq Y_{is}$.
\end{itemize}
\end{claim}
\begin{proof}
Since $E_i\cap E_j=\emptyset$, (1) and (2) can not happen at same time. Hence, to prove the claim is true, it suffices to show that (1) or (2) holds.
Note that, following a similar way as the proof of Claim \ref{color+}, when $X_{jt}$ is not complete to $Y_{is}$, some edge in $[X_{is},X_{jt}]$ is in $E_i$ by Claim \ref{color}, implying $[X_{is},X_{jt}]\subseteq E_i$ by Lemma \ref{complete adjacent}; and when $X_{is}$ is not complete to $Y_{jt}$, some edge in $[X_{is},X_{jt}]$ is in $E_j$, implying $[X_{is},X_{jt}]\subseteq E_j$. Hence, either $X_{jt}$ is complete to $Y_{is}$ or $X_{is}$ is complete to $Y_{jt}$.
Without loss of generality we may assume that $X_{jt}$ is complete to $Y_{is}$, implying $Y_{is}\subseteq Y_{jt}$.
When some vertex in $X_{is}$ has a non-neighbour in $Y_{is}$, following a similar way as the proof of Claim \ref{color+} again, we have $[X_{is},X_{jt}]\subseteq E_j$, so (1) holds. Hence, we may assume that $X_{is}$ is complete to $Y_{is}$. Since $X_{is}\cup X_{jt}$ is not a connected homogeneous set of $G$, some vertex $u\in X_{jt}$ has a neighbour $y$ in $Y-Y_{is}$ by Claim \ref{Yij}, so $y$-$u$-$v$ is an induced path for any $v\in X_{is}$ by Claims \ref{complete} and \ref{Yij}. Since $yu\in E_j$ by Claim \ref{color}, we have $[X_{is},X_{jt}]\subseteq E_j$ by Lemma \ref{complete adjacent}. That is, (1) holds. This proves Claim \ref{color+1}.
\end{proof}

Let $D$ be a directed graph with vertex set $\{X_{is}:\ 1\leq i\leq m,\ 1\leq s\leq m_i\}$.
For any integers $1\leq i<j\leq m$, $1\leq s\leq m_i$ and $1\leq t\leq m_j$, the vertex $X_{is}$ is directed to $X_{jt}$ if Claim \ref{color+1} (1) happens, 
and $X_{jt}$ is directed to $X_{is}$ if Claim \ref{color+1} (2) happens. 
Assume that $D$ has a directed cycle $C$. By Claim \ref{color+1}, the neighbourhood $Y_{is}$ of all vertices $X_{is}$ in $V(C)$ are the same, and $\bigcup_{X_{is}\in V(C)}X_{is}$ is complete to $Y_{is}$, so $\bigcup_{X_{is}\in V(C)}X_{is}$ is a connected homogenous set of $G$, which is a contradiction. So $D$ is acyclic.

Since $D$ is acyclic, there is a vertex $X_{is}$ of $D$ whose out-degree is zero. 
Then $[X_{is},X_{jt}]\subseteq E_i$ for any integers $1\leq j\neq i\leq m$ and $1\leq t\leq m_j$ by the definition of $D$ and Claim \ref{color+1}. Moreover, by Claims \ref{color} and \ref{color+}, $[X_{is},V(G)-X_{is}]\subseteq E_i$. Hence, $G|E_j$ is not spanning for any $j$ with $1\leq j\neq i\leq m$, a contradiction to Claim \ref{Ei}.
\end{proof}

\begin{proof}[Proof of Theorem \ref{main result}.]
Theorem \ref{main result} follows immediately from Lemmas \ref{P3-connected} and \ref{main lem}.
\end{proof}


	



	\vspace{5pt}

\end{document}